\pdfoutput=1
\documentclass[preprint,11pt,a4paper]{elsarticle}

\usepackage[margin=25mm]{geometry}
\usepackage[T1]{fontenc}
\usepackage{lmodern}
\usepackage{amsmath,amssymb,amsthm}
\usepackage{bm}
\usepackage{siunitx}
\usepackage{booktabs}
\usepackage{array}
\usepackage{graphicx}
\usepackage{subcaption}
\usepackage{xcolor}
\usepackage{microtype}
\usepackage[hidelinks]{hyperref}
\usepackage[capitalise,nameinlink]{cleveref}
\hypersetup{pdfauthor={Wyatt Horne},pdftitle={Swarmlet: a GPU-resident reformulated vortex particle method for
  coupled multirotor aerodynamics and flight dynamics}}

\biboptions{sort&compress}

\makeatletter
\def\ps@pprintTitle{\let\@oddhead\@empty\let\@evenhead\@empty
  \def\@oddfoot{\footnotesize\itshape Preprint\hfill October 2026}\let\@evenfoot\@oddfoot}
\makeatother

\newtheorem{proposition}{Proposition}
\newcommand{\vect}[1]{\boldsymbol{#1}}
\newcommand{\dd}{\mathrm{d}}
\newcommand{\xp}{\vect{x}_p}
\newcommand{\ap}{\vect{\alpha}_p}
\newcommand{\uvec}{\vect{u}}
\newcommand{\wvec}{\vect{\omega}}
\newcommand{\Jt}{\mathsf{J}}
\newcommand{\order}{\mathcal{O}}

\graphicspath{{figures/}}

\begin{document}

\begin{frontmatter}

\title{Swarmlet: a GPU-resident reformulated vortex particle method for coupled multirotor aerodynamics and flight dynamics}

\author[inst1]{Wyatt Horne\corref{cor1}}
\ead{whorne@sandia.gov}
\cortext[cor1]{Corresponding author.}
\affiliation[inst1]{organization={Sandia National Laboratories},
  city={Albuquerque},
  country={USA}}

\begin{abstract}
We present Swarmlet, a GPU solver for the unsteady aerodynamics and flight dynamics of one or many multirotor
vehicles. The wake is a meshless, evolving-core vortex particle field in the reformulated (rVPM) large-eddy
formulation, coupled to rotor models of three fidelities, six-degree-of-freedom vehicles and flight control.
Particles carry a smooth core of compact support whose field outside four core radii is exactly the singular
Biot--Savart field, so the multipole far field of the treecode carries no regularization error. The subfilter closure
is a dynamic eddy viscosity on a fixed filter width, applied by core spreading with a Lagrangian exchange of its
local excess under an explicit stability bound, and the strengths are relaxed by projection; global corrections make
both the exchange and the relaxation conserve total vorticity and linear impulse. All rotor models shed connected
bound circulation, the disks along the arcs swept in each step however long, so the wake receives the lift impulse. The solver agrees with direct summation to $5\times10^{-7}$ and
converges at third order in time. Without calibration, an actuator-line model that includes its own near sheet
predicts the hover thrust of a DJI~9443 propeller within $2\%$. With calibrated section polars it reproduces a
Crazyflie propeller's thrust and torque within $3.2\%$ at three speeds and the near wake of a hovering quadrotor
with relative $L^2$ differences of \numrange{0.08}{0.21} against particle image velocimetry, while the wake keeps the
delivered impulse. A closed-loop formation of 100 quadrotors, run at a coarser setting on one GPU, illustrates the
coupled method at swarm scale.
\end{abstract}

\begin{keyword}
vortex particle method \sep large-eddy simulation \sep multirotor aerodynamics \sep
treecode \sep GPU computing \sep verification and validation
\end{keyword}

\end{frontmatter}

\section{Introduction}
\label{sec:introduction}

Small multirotor vehicles increasingly fly close to one another, to terrain and to structures. Their wakes interact:
a lower vehicle flies in the jet of its neighbour, rotors on one airframe ingest each other's tip vortices, and the
resulting changes in thrust and moment must be absorbed by the flight
controller~\cite{yoon2016,shukla2018,alvarez2020,lee2020,kiran2024}. Predicting these effects requires an unsteady
wake model coupled both to the rotor loads that create the wakes and to the motion of the vehicles. Blade-element
momentum theory assumes an isolated rotor in steady, axisymmetric inflow, and mesh-based large-eddy simulation must
transport the wakes through a volume mesh that follows every vehicle, which is expensive for one rotorcraft and
prohibitive for a formation.

Lagrangian vortex methods occupy the middle ground~\cite{leonard1980,cottet2000}. Particles carry vorticity, so
computational elements exist only where vorticity does and are convected without numerical dissipation. Regularized
vortex particles have a convergence theory~\cite{beale1982,cottet2000}, viscous diffusion can be represented by
particle strength exchange~\cite{degond1989} or core spreading, and large simulations have been performed on
parallel machines and GPUs~\cite{chatelain2008,rossinelli2010,yokota2013}. For rotorcraft, the viscous vortex
particle method of He and Zhao~\cite{he2009}, DUST~\cite{tugnoli2021} and FLOWUnsteady~\cite{alvarez2022flowunsteady}
couple particle wakes to lifting-line or panel models of the rotors at a fraction of the cost of mesh-based CFD. Such
wakes become numerically unstable once they turn turbulent, because stretching inflates particle strengths while the
cores stay fixed. The reformulated vortex particle method (rVPM) of Alvarez and Ning~\cite{alvarez2024} resolves this
by sharing stretching between strength and core size and adding a subfilter-scale (SFS) model, which yields a
meshless large-eddy simulation.

This paper describes Swarmlet, a GPU solver built on rVPM for the aerodynamics and flight dynamics of one or many
multirotor vehicles. The same method runs on one GPU or on many, and one set of
wake numerics is used for every validation case. Its contributions are:
\begin{enumerate}
\item a smooth vorticity core of compact support whose exterior field is exactly the singular Biot--Savart field, so
that the multipole far field carries no regularization error and near-field operators have finite range
(\cref{sec:core});
\item an SFS closure for rVPM with a dynamic eddy viscosity on a fixed filter width, applied by core spreading with a
Lagrangian exchange of its local excess, and a relaxation by projection; the changes of total vorticity and linear
impulse of both the exchange and the relaxation are removed over the whole wake (\cref{sec:sfs,sec:relaxation});
\item rotor models of three fidelities that shed their lift as connected bound circulation, so that the wake receives
the lift impulse, including an actuator line with the induction of its own near sheet, disks that shed along
the arcs they sweep at any time step, and a blade-element disk with Prandtl's tip loss on its inflow
(\cref{sec:rotors});
\item a single GPU-resident treecode for one or many MPI ranks, coupled to six-degree-of-freedom vehicles and flight
control (\cref{sec:coupling,sec:solver});
\item verification on canonical problems and validation against three experiments
(\cref{sec:verification,sec:validation}), and a demonstration on a closed-loop formation of 100 quadrotors
(\cref{sec:swarm}).
\end{enumerate}
\Cref{sec:method} presents the method, \cref{sec:verification,sec:validation} its verification and validation and
\cref{sec:swarm} the swarm demonstration; \cref{sec:discussion} discusses limitations and concludes.

\section{Method}
\label{sec:method}

\subsection{Filtered vorticity transport and vortex particles}
\label{sec:formulation}

For incompressible flow of density $\rho$ and kinematic viscosity $\nu$, the filtered vorticity
$\wvec=\nabla\times\uvec$ satisfies
\begin{equation}
\frac{\mathrm{D}\wvec}{\mathrm{D}t}=(\wvec\cdot\nabla)\uvec+\nu\nabla^2\wvec+\vect{E}_{\mathrm{sfs}},\qquad
\uvec(\vect{x})=\uvec_\infty+\frac{1}{4\pi}\int\frac{\wvec(\vect{y})\times(\vect{x}-\vect{y})}{|\vect{x}-\vect{y}|^3}\,\dd\vect{y}
+\uvec_{\mathrm{b}},
\label{eq:vorticity}
\end{equation}
where $\vect{E}_{\mathrm{sfs}}$ is the effect of unresolved scales, $\uvec_\infty$ a prescribed background flow and
$\uvec_{\mathrm{b}}$ the field of the circulation bound to rotor blades (\cref{sec:rotors}). All flows considered
here are unbounded. The free vorticity is carried by $N$ particles with position $\xp$, vector strength $\ap$,
core size $\sigma_p$ and material volume $V_p$, $\wvec_\sigma=\sum_p\ap\,\sigma_p^{-3}\zeta(|\vect{x}-\xp|/\sigma_p)$,
with a radially symmetric core $\zeta$ of unit integral. The induced velocity is
\begin{equation}
\uvec_\sigma(\vect{x})=\sum_{p=1}^N q(r_p/\sigma_p)\,\frac{\ap\times\vect{r}_p}{4\pi r_p^3},\qquad
\vect{r}_p=\vect{x}-\xp,\qquad q(\rho)=4\pi\int_0^\rho\zeta(s)s^2\,\dd s,
\label{eq:particle-velocity}
\end{equation}
and its gradient $\Jt=\nabla\uvec$ follows analytically. Stretching is discretized in the transposed form
$\vect{S}_p=\Jt_p^{\mathsf T}\ap$~\cite{winckelmans1993,cottet2000}, with $\Jt_p$ the gradient of the total field.
In the reformulated vortex particle method (rVPM) of Alvarez and Ning~\cite{alvarez2024} the stretching along a
particle's own axis is shared between its strength and its core; with their coefficients $f=0$, $g=1/5$ and
$\hat\ap=\ap/|\ap|$,
\begin{equation}
\frac{\dd\xp}{\dd t}=\uvec(\xp),\qquad
\frac{\dd\ap}{\dd t}=\vect{S}_p-\frac35\left(\vect{S}_p\cdot\hat\ap\right)\hat\ap,\qquad
\frac{\dd\sigma_p}{\dd t}=-\frac15\,\sigma_p\,\frac{\vect{S}_p\cdot\hat\ap}{|\ap|},\qquad
\frac{\dd V_p}{\dd t}=0,
\label{eq:rvpm}
\end{equation}
to which the viscous and subfilter terms of \cref{sec:sfs} are added. The core is integrated as $\sigma_p^2$.

\subsection{A smooth core of compact support}
\label{sec:core}

The common Gaussian core~\cite{winckelmans1993,alvarez2024}, $\zeta_G(\rho)=(2\pi)^{-3/2}e^{-\rho^2/2}$, differs
from the singular kernel at every distance, so a fast summation method must neglect or correct that difference. We
subtract from the Gaussian its first-order Taylor expansion in $\rho^2$ about $\rho^2=R^2$ and renormalize,
\begin{equation}
\tilde\zeta(\rho)=K(2\pi)^{-3/2}\left[e^{-\rho^2/2}-\gamma\left(1+\frac{R^2-\rho^2}{2}\right)\right]\ \ (\rho<R),\qquad
\tilde\zeta=0\ \ (\rho\ge R),\qquad \gamma=e^{-R^2/2},
\label{eq:compact-core}
\end{equation}
with $R=4$ and $K=1.02576$. Both $\tilde\zeta$ and $\tilde\zeta'$ vanish at $\rho=R$, so the velocity and its
gradient are continuous, and the polynomial term integrates in closed form,
$\tilde q(\rho)=K[q_G(\rho)-\sqrt{2/\pi}\,\gamma\,\rho^3((1+R^2/2)/3-\rho^2/10)]$ for $\rho<R$, with
$q_G(\rho)=\operatorname{erf}(\rho/\sqrt2)-\sqrt{2/\pi}\,\rho e^{-\rho^2/2}$. The compact core has a $2.3\%$ higher
peak and a $2.2\%$ smaller root-mean-square radius than the Gaussian of the same $\sigma$ (\cref{fig:core}).

\begin{proposition}[Exact exterior field]
\label{prop:exterior}
If $\zeta$ is radially symmetric with unit integral and support in the ball of radius $R$, then at every point with
$|\vect{x}-\xp|\ge R\sigma_p$ the velocity and velocity gradient induced by particle $p$ equal those of a singular
particle of strength $\ap$ at $\xp$.
\end{proposition}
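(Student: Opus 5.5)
The plan is to work directly from the closed form \cref{eq:particle-velocity}: the field of particle $p$ is $q(r_p/\sigma_p)\,\ap\times\vect{r}_p/(4\pi r_p^3)$, while the singular particle gives the same expression with $q\equiv1$. The statement therefore reduces to showing that $q(\rho)=1$ for all $\rho\ge R$. The gradient statement then follows from the same identity, provided we check that differentiating is legitimate on the closed region $r_p\ge R\sigma_p$, including at its boundary.

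\emph{Step 1: $q=1$ outside the support.} Because $\zeta$ is radially symmetric with unit integral, spherical coordinates give $1=\int_{\mathbb{R}^3}\zeta(|\vect{y}|)\,\dd\vect{y}=4\pi\int_0^\infty\zeta(s)s^2\,\dd s$. For $\rho\ge R$ the integrand vanishes on $(R,\infty)$ since $\zeta$ is supported in the ball of radius $R$. Hence
\[
q(\rho)=4\pi\int_0^\rho\zeta(s)s^2\,\dd s=4\pi\int_0^\infty\zeta(s)s^2\,\dd s=1 .
\]
Substituting into \cref{eq:particle-velocity}, the velocity induced by particle $p$ at any $\vect{x}$ with $r_p\ge R\sigma_p$ is exactly $\ap\times\vect{r}_p/(4\pi r_p^3)$.

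\emph{Step 2: the gradient.} On the open set $r_p>R\sigma_p$, the regularized and singular fields coincide, so their gradients agree there. An alternative route gives the same result: differentiating $q(r_p/\sigma_p)\,\vect{K}(\vect{r}_p)$ with the product rule produces the extra term $\sigma_p^{-1}q'(r_p/\sigma_p)\,\hat{\vect{r}}_p\otimes\vect{K}$. Here $q'(\rho)=4\pi\zeta(\rho)\rho^2=0$ for $\rho>R$.

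\emph{Main obstacle: the boundary sphere $r_p=R\sigma_p$.} At the boundary itself one needs a one-sided argument, or continuity of $\zeta$ at $R$ so that $q'(R)=0$. Under the bare hypotheses of the proposition, $\zeta$ may jump at $R$, so I would argue as follows. For $r_p\ge R\sigma_p$, take the gradient as the limit from the exterior region, where $q\equiv1$ identically on $[R,\infty)$; the gradient of the induced field is evaluated there. If $\zeta$ is continuous at $R$, as it is for the compact core \cref{eq:compact-core} where $\tilde\zeta(R)=0$, then $q$ is $C^1$ at $R$ with $q'(R)=0$. In that case the two-sided derivative exists and equals the singular one, with no further qualification. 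I would state this remark alongside the proof, noting that for $\tilde\zeta$ even $\tilde\zeta'(R)=0$, which gives continuity of the gradient across the sphere.
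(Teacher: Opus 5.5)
Your proof is correct and rests on the same fact as the paper's: the enclosed fraction $q$ equals one beyond $R$. What differs is the level at which that fact is used.

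The paper works with the stream function. With $-\nabla^2\phi_p=\sigma_p^{-3}\zeta(r/\sigma_p)$, the divergence theorem gives $\phi_p(r)=\int_r^\infty q(s/\sigma_p)/(4\pi s^2)\,\dd s$. By Newton's shell theorem this is the singular potential $1/(4\pi r)$ for $r\ge R\sigma_p$, and the velocity and its gradient are first and second derivatives of this potential. That route derives the kernel \cref{eq:particle-velocity} from the vorticity instead of presupposing it, and it gives agreement of derivatives of every order at once on the open exterior.

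You take \cref{eq:particle-velocity}, which the paper states as the induced field, as given. Your argument is therefore shorter, and its real gain is the treatment of the sphere $|\vect{x}-\xp|=R\sigma_p$ itself. The paper's argument, like your Step 2, delivers the gradient only on the open exterior. Equality of $\phi_p$ on the closed set gives equality of first derivatives on the sphere, since $\phi_p'=-q/(4\pi r^2)$ is continuous. The second derivatives, however, exist there only if $\zeta(\rho)\to0$ as $\rho\to R^-$. So the proposition's claim about $\Jt$ on the sphere tacitly needs that continuity, or a one-sided reading. Your remark makes this explicit, and $\tilde\zeta$ satisfies it.

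One small correction to that remark. Continuity of $\Jt$ across the sphere already follows from $\tilde\zeta(R)=0$, because $\Jt$ involves $q'(\rho)=4\pi\zeta(\rho)\rho^2$. The further condition $\tilde\zeta'(R)=0$ buys continuity of the derivatives of $\Jt$.
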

\begin{proof}
The velocity is the curl of $\vect\psi=\sum_p\ap\phi_p$ with $-\nabla^2\phi_p=\sigma_p^{-3}\zeta(r/\sigma_p)$. As
$\phi_p$ is radial, the divergence theorem gives $\phi_p(r)=\int_r^\infty q(s/\sigma_p)/(4\pi s^2)\,\dd s$, which
for $r\ge R\sigma_p$ is the singular potential $1/(4\pi r)$ (Newton's shell theorem).
\end{proof}

Hence a multipole expansion evaluated outside all supports of its particles carries no regularization error, and
the directly summed interactions have finite range (\cref{sec:solver}). The price is a slightly different core
shape: a thin ring translates $0.44\%$ faster than with a Gaussian core (\cref{sec:ver-ring}).

\begin{figure}[t]
\centering
\includegraphics[width=\textwidth]{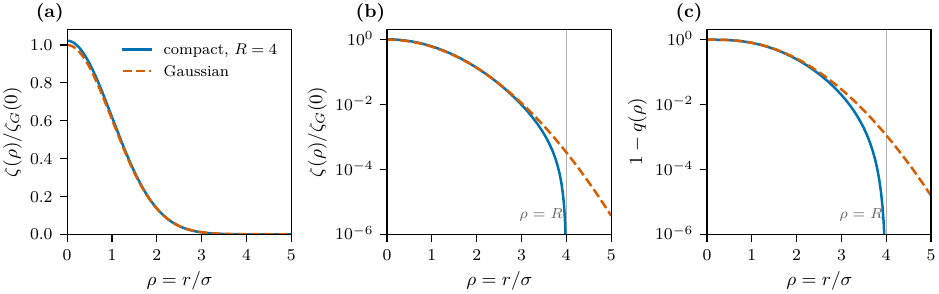}
\caption{The compact core, \cref{eq:compact-core} with $R=4$ (solid), and the Gaussian core of the same $\sigma$
(dashed). (a) Radial profile $\zeta(\rho)$; (b) the same on a logarithmic scale; (c) the fraction $1-q(\rho)$ of the
strength outside radius $\rho\sigma$, the relative difference between the regularized and the singular kernel,
which vanishes beyond $\rho=R$ for the compact core.}
\label{fig:core}
\end{figure}

\subsection{Subfilter closure}
\label{sec:sfs}

Large-eddy vortex methods model $\vect{E}_{\mathrm{sfs}}$ as a dissipation of strength along the stretching
direction~\cite{alvarez2024} or as an eddy-viscosity diffusion~\cite{mansfield1998,winckelmans1993}. We use the latter,
$\vect{E}_{\mathrm{sfs}}=\nabla\cdot(\nu_T\nabla\wvec)$ with $\nu_T=C\Delta^2|\mathsf{S}|$, $\mathsf{S}$ the
strain-rate tensor and $|\mathsf{S}|=(2\mathsf{S}:\mathsf{S})^{1/2}$. The filter width is fixed at the resolution
of the field, $\Delta^2=12\sigma_{s,\max}^2$, the box-filter width whose second moment equals that of a Gaussian of
the largest source core $\sigma_{s,\max}$ of the rotors (\cref{sec:rotors})~\cite{pope2000}. It does not follow the particle cores: under stretching a core
shrinks, and a filter width proportional to it would switch off the model of the particles that need it most. The
coefficient is obtained by the dynamic procedure~\cite{germano1991} in Lilly's least-squares form~\cite{lilly1992}
with a test filter of twice the width. Replacing the Leonard and model tensors by their leading-order
expansions~\cite{clark1979}, $\mathsf{L}\approx\frac{\Delta^2}{4}\mathsf{G}$ with $\mathsf{G}=\Jt\Jt^{\mathsf T}$ and
$\mathsf{M}\approx-6\Delta^2|\mathsf{S}|\mathsf{S}$, gives the global coefficient
\begin{equation}
C=\max\left(0,\;-\frac{1}{24}\,
\frac{\langle|\mathsf{S}|\,\mathsf{G}:\mathsf{S}\rangle}{\langle|\mathsf{S}|^2\,\mathsf{S}:\mathsf{S}\rangle}\right),
\label{eq:dynamic-coefficient}
\end{equation}
where $\langle\cdot\rangle$ is a volume-weighted average over all particles, evaluated at every right-hand side.
Because $\mathsf{G}:\mathsf{S}=\operatorname{tr}(\mathsf{S}^3)-\frac14\wvec\cdot\mathsf{S}\wvec$ contains only strain
self-amplification and vortex stretching, $C$ vanishes for two-dimensional laminar vortices.

The global viscosity is applied together with the molecular one by core spreading~\cite{leonard1980},
\begin{equation}
\frac{\dd\sigma_p^2}{\dd t}=2\,\bigl(\nu+C\Delta^2|\mathsf{S}_p|\bigr),
\label{eq:core-spreading}
\end{equation}
added to the rVPM core rate. Strengths and positions are untouched, so total vorticity and linear impulse are
conserved exactly and no overlapping neighbours are needed. Grown cores convect with the velocity at their centre,
the consistency limit of core spreading~\cite{greengard1985}; in the hover wake of \cref{sec:val-wake} the median
core of particles \numrange{24}{48} revolutions old is $1.17\sigma_s$.

Core spreading never removes strength, whereas under \cref{eq:rvpm} strengths grow with the stretching rate
$s=\hat\ap\cdot\Jt\hat\ap$ as $\dd\ln|\ap|/\dd t=\frac25s$, so that in developed turbulence strong particles of
opposite sign accumulate. The global coefficient, moreover, is dominated by the laminar shear layers and tip vortices
of a rotor wake. Each particle therefore carries a Lagrangian-averaged coefficient~\cite{meneveau1996}, obtained by
relaxing the products $\mathcal{I}_{LM}$, $\mathcal{I}_{MM}$ of its own leading-order tensors along its path,
\begin{equation}
\mathcal{I}\leftarrow\mathcal{I}+\epsilon\,(\mathcal{I}^{(p)}-\mathcal{I}),\qquad \epsilon=\frac{\Delta t/\tau}{1+\Delta t/\tau},\qquad
\tau=1.5\,\Delta\,(\mathcal{I}_{LM}\mathcal{I}_{MM})^{-1/8},
\label{eq:lagrangian}
\end{equation}
and taking $C_p=\max(C,\,\mathcal{I}_{LM}/\mathcal{I}_{MM})$, starting from $C$ at birth. The excess $\nu_{T,p}=(C_p-C)\Delta^2|\mathsf{S}_p|$ is applied by particle strength
exchange (PSE)~\cite{degond1989,winckelmans1993} once per fluid step,
\begin{equation}
\left(\frac{\dd\ap}{\dd t}\right)_{T}=\sum_q\frac{c_L\,\nu_{pq}}{\sigma_{pq}^2}\,
\eta_{\sigma_{pq}}(|\xp-\vect{x}_q|)\,\bigl(V_p\vect\alpha_q-V_q\ap\bigr),\qquad
\nu_{pq}=\tfrac12(\nu_{T,p}+\nu_{T,q}),\quad \sigma_{pq}^2=\tfrac12(\sigma_p^2+\sigma_q^2),
\label{eq:pse}
\end{equation}
with $\eta$ a normalized Gaussian truncated at three widths and $c_L=2/m_2$, $m_2$ its second moment. In laminar or
young flow $C_p=C$ and the closure reduces to core spreading; where turbulence has developed, the exchange cancels
strength of opposite sign between neighbours.

The exchange is advanced by one explicit step. Its operator is similar to a symmetric negative semidefinite matrix
with columns summing to zero, so by Gershgorin's theorem its eigenvalues lie in $[-2\max_qD_q,0]$, with
$D_q=\sum_pc_L\nu_{pq}\eta_{\sigma_{pq}}V_p/\sigma_{pq}^2$ its diagonal. Where particles tile space,
$\sum_pV_p\eta_{\sigma_{pq}}\approx1$ and the usual bound on $\nu_T\Delta t/\sigma^2$ suffices, but where newly shed
particles crowd together the diagonal can be far larger. Each particle's exchanged viscosity is therefore limited to
$\nu_{T,p}\le1/(4\Delta t\hat D_p)$, $\hat D_p$ being its diagonal per unit viscosity, which keeps
$\Delta tD_q\le(1+\kappa)/8$ with $\kappa$ the largest ratio of neighbouring diagonals: no mode changes sign while
$\kappa<3$. The
limit acts on the operator, so the exchange stays antisymmetric.

The exchange's pairs conserve total vorticity exactly, but they move strength between points up to a support apart
and so change the linear impulse, which the diffusion they discretize conserves; with cores comparable to the rotor
radius this drains a wake's momentum. Its change of the strength rates is therefore corrected as the relaxation's is,
\cref{eq:conservative-relaxation}, so that the closure conserves total vorticity and linear impulse exactly.

\subsection{Relaxation}
\label{sec:relaxation}

The discrete field is not exactly solenoidal, and particle strengths drift out of alignment with the curl of the
reconstructed velocity. Following Pedrizzetti~\cite{pedrizzetti1992,winckelmans1993}, the strengths are relaxed
after each coupled step, here by projection onto the local vorticity direction,
\begin{equation}
\ap\leftarrow(1-\lambda)\ap+\lambda\,(\ap\cdot\hat{\wvec}_p)\,\hat{\wvec}_p,\qquad
\hat{\wvec}_p=\frac{\wvec_p^*}{|\wvec_p^*|},\quad \wvec_p^*=\nabla\times\uvec(\xp),
\label{eq:relaxation}
\end{equation}
with $\lambda=0.03$. The projection lets the misaligned part of each strength, which stems from unresolved
scales~\cite{cottet2000}, decay, whereas the blend towards $|\ap|\hat{\wvec}_p$ keeps $|\ap|$ and so converts
misaligned strength into aligned strength.

Neither form conserves linear impulse, and the rVPM correction $-\frac35(\vect{S}_p\cdot\hat\ap)\hat\ap$ of
\cref{eq:rvpm} changes it as well. With $\vect d_p$ the relaxation's change of particle $p$, we remove the net
changes of total vorticity and impulse by the least correction weighted by $|\vect d_p|$,
\begin{equation}
\vect d_p\leftarrow\vect d_p-|\vect d_p|\bigl(\vect a+\vect b\times(\xp-\bar{\vect x})\bigr),
\label{eq:conservative-relaxation}
\end{equation}
with $\vect a$, $\vect b$ and the weighted centroid $\bar{\vect x}$ such that $\sum\vect d_p=\vect 0$ and
$\sum\xp\times\vect d_p=\vect 0$. The free wake's strength rate in \cref{eq:rvpm} is corrected in the same way,
so that $\sum\dd\ap/\dd t$ and $\sum(\uvec_p\times\ap+\xp\times\dd\ap/\dd t)$ vanish. Both corrections are applied
over the whole wake, at $\order(N)$ cost and one global reduction; because the relaxation's changes are coherent
over a vortex, cancelling them locally would undo much of the realignment itself.

As a reference closure we also use a \emph{PSE variant}, which applies the molecular and dynamic eddy viscosity entirely
through \cref{eq:pse} with each particle's own core as filter width, $\Delta^2=12\sigma_p^2$, and relaxes the
strengths by the blend towards $|\ap|\hat{\wvec}_p$ with $\lambda=0.3$~\cite{alvarez2024}.
It appears in \cref{sec:ver-sfs,sec:val-wake}.

\subsection{Particle lifecycle}
\label{sec:lifecycle}

Particles are created by the rotors. Particles whose strength leaves
$[10^{-4},10]\,\alpha_{\mathrm{ref}}$, $\alpha_{\mathrm{ref}}$ a characteristic newborn strength, or whose core
leaves $[0.1\sigma_{s,\min},5\sigma_{s,\max}]$, $\sigma_s$ the source cores, are removed,
and a particle with a non-finite proposed state is removed and counted. A removed strength above the lower bound is handed to
nearby particles of the same rotor's wake with local maximum-entropy weights~\cite{arroyo2006}, which preserve its sum
and first moment and hence total vorticity and linear impulse. A run may also bound its memory by removing its oldest
particles once its store fills; this removal is not conservative and is used only in modest fidelity calculations with large numbers of drones such as \cref{sec:swarm}.

\subsection{Rotor models and wake coupling}
\label{sec:rotors}

A rotor of radius $R_r$ with $B$ blades spins at $\Omega$ about its axis $\vect n$. At a blade section of chord $c$
and twist $\vartheta$, the relative velocity projected on the plane normal to the span gives the speed $W$ and the
inflow angle $\phi$. With $c_l$ and $c_d$ taken from tabulated polars at the incidence $\vartheta-\phi$,
\begin{equation}
L'=\tfrac12\rho W^2c\,c_l,\qquad D'=\tfrac12\rho W^2c\,c_d,\qquad \Gamma=\tfrac12Wc\,c_l .
\label{eq:blade-element}
\end{equation}
Three representations are available (\cref{fig:rotor-models}). The \emph{performance-map disk} (PM) interpolates
$C_T$ and $C_Q$ from tables at the operating point given by the flow at a probe upstream of the disk and spreads the
thrust uniformly over equal-area annuli on $N_\theta$ rotating spokes of zero chord. The \emph{blade-element disk}
(BED) places on each of $N_r$ annuli $N_\theta$ rows with the blade's chord and weight $B/N_\theta$, and applies
\cref{eq:blade-element} with the inflow averaged around each annulus. The \emph{actuator line} (AL) resolves the $B$
blades on the same elements. All three sample the free wake and the background flow at their load points with the
particle solver; only the blade-element disk adds a tip loss, below.

\begin{figure}[t]
\centering
\includegraphics[width=\textwidth]{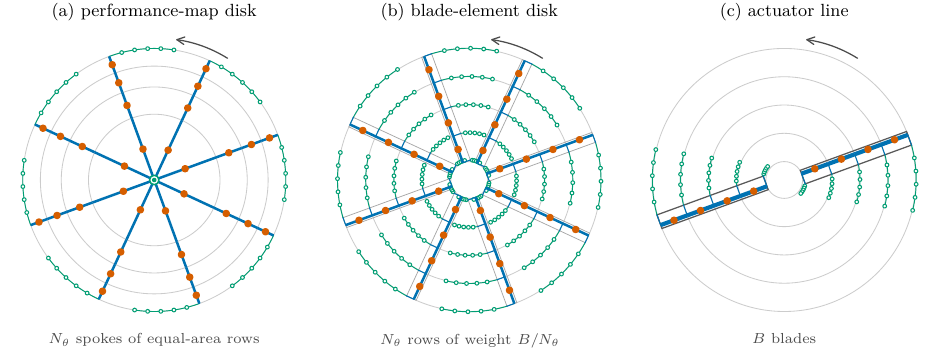}
\caption{The rotor representations under steady loading, viewed along the axis (rotation anticlockwise). Load points
(dots) carry bound circulation (thick lines) at the quarter chord; trailing vortices leave the row edges as particles
(circles). (a) Performance-map disk: uniform loading on equal-area annuli and $N_\theta$ zero-chord spokes, so
only tip trailers and a root vortex are shed. (b) Blade-element disk: $N_\theta$ rows of weight $B/N_\theta$ with
the blade's chord on elements of equal width. (c) Actuator line: the $B$ blades on the same elements.}
\label{fig:rotor-models}
\end{figure}

The free wake begins only where shed circulation becomes particles, at the trailing edge and at the end of the
step, so a lifting line that samples it misses the downwash of its own nearest trailing sheet, which is strongest
outboard. The actuator line adds it: for every section $j$, the vortex lines from the quarter chord to the trailing
edge, the trailing-edge path of the current step and, for the other blades, the bound segments induce
\begin{equation}
\vect{u}_{\mathrm{ns}}(\vect{x}_i)=\sum_j\sum_{s\in\mathcal{S}_j}\Gamma_s\,
\vect{K}(\vect{x}_i;\vect{a}_s,\vect{b}_s)\left(1-\mathrm{e}^{-d_{is}^2/\delta_j^2}\right),
\qquad \delta_j=\tfrac14c_j,
\label{eq:near-sheet}
\end{equation}
where $\vect K$ is the Biot--Savart kernel of a straight segment of unit circulation~\cite{katz2001} and $d_{is}$
the distance of $\vect{x}_i$ from the segment's line. The core of a quarter chord lies in the range of optimal
actuator-line smoothing widths, \numrange{0.14}{0.25}$\,c$~\cite{martineztossas2017}; it makes the sum independent of the
discretization and suppresses spanwise oscillations of the explicit coupling between sections. As in lifting-line
solvers~\cite{alvarez2022flowunsteady}, the circulation is relaxed, $\Gamma\leftarrow\frac12(\Gamma_{\mathrm{polar}}+\Gamma_{n-1})$,
which leaves steady states unchanged.

The blade-element disk spreads each blade over $N_\theta$ rows and samples the annulus average, which underestimates the induction
near the tip: there a blade sees more induction than the average, from its own trailing sheet just behind it.
The disk restores it with Prandtl's tip loss in its inflow form~\cite{glauert1935,leishman2006}. Prandtl's factor
\begin{equation}
F=\frac2\pi\arccos\exp\left(-\frac B2\,\frac{R_r-r}{r\sin\phi}\right),\qquad \tan\phi=\frac{U}{V_t},
\label{eq:prandtl}
\end{equation}
relates the induction averaged over the annulus at radius $r$ to the induction $v_b$ at the blade, and the annulus
momentum balance $Fv_bU=B\Omega\Gamma/(4\pi)$ gives $v_b$ from the ring's circulation. Each ring adds the excess
$(1-F)v_b$ to its averaged through-flow $\bar U$, so that the through-flow at the blade is $U=\bar U+(1-F)v_b$, with
$V_t$ the tangential speed; $\phi$ and the excess are solved together by fixed-point iteration. Only the rotor's own
induction is raised, so external inflow, such as another vehicle's wake, is not amplified. Raising the inflow rather
than scaling the lift lowers the incidence of the outer sections, so that their lift and profile drag fall together.

\paragraph{Connected shedding and impulse}
All representations shed connected circulation, adapting the circulation-based shedding
of~\cite{alvarez2022flowunsteady} to disks. Each row carries bound circulation at its quarter chord: the
Kutta--Joukowski value of \cref{eq:blade-element} or, for a map row at radius $r$ with width $\Delta r$ and thrust
share $f_z$, $\Gamma=f_z/(\rho\Omega r\Delta r)$. Spanwise differences of $\Gamma$ are shed as trailing legs along
the path that each row edge's trailing end travels over the step, and temporal changes as spanwise legs, each
becoming particles of total strength $\Gamma\vect\ell$ with the source core $\sigma_s$; coincident edges of
neighbouring rows are merged. An actuator-line blade sheds the straight chord of that path, placed by Gauss
quadrature, which requires the step to resolve the rotation: over a rotation by $\beta$ the chord encloses only
$\sin\beta/\beta$ of the area the blade sweeps, so actuator lines take at least 18 steps per revolution, where
the chord carries $98\%$ of it. The disks shed along the arc itself, whole turns included, and gather the arcs of each
ring edge on azimuth bins and time layers of a quarter core. The number of particles born per step is then bounded, and a step
that resolves the rotation gives one particle per bin, as the quadrature would. Since a closed loop of
circulation $\Gamma$ has impulse $\rho\Gamma\vect A$, $\vect A$ its vector area~\cite{cottet2000}, the vortex system
born in a step carries the impulse of the loop swept by the bound circulation, the step's lift impulse. The discrete
balance
\begin{equation}
\vect{I}_{\mathrm{newborn}}+\Delta\vect{I}_{\mathrm{bound}}-\vect{I}_{\mathrm{transport}}
=\vect{I}_{\mathrm{swept}}+\vect{I}_{\mathrm{unsteady}}
\label{eq:birth-impulse}
\end{equation}
holds per rotor and step to a relative residual of $10^{-15}$ in double and $5\times10^{-7}$ in single precision for
the actuator line; the disks' gathering on bins departs from their arcs' impulse by less than $10^{-9}$ in double
precision when the step resolves the rotation, and by less than $10^{-3}$ over steps of several turns.
Profile drag acts on the vehicle only, and a body-force representation of the lift would duplicate the bound
vorticity~\cite{meyerforsting2019}. The ratio
\begin{equation}
\Pi(t)=\frac{-\rho\,\vect{n}\cdot\vect{I}_{\mathrm{wake}}(t)}{\int_0^t T\,\dd t'},\qquad
\vect{I}_{\mathrm{wake}}=\frac12\sum_p\xp\times\ap,
\label{eq:impulse-ratio}
\end{equation}
of the wake's impulse to the delivered thrust impulse is one at birth and measures how well the wake keeps the
momentum it receives.

\subsection{Vehicle dynamics and partitioned time integration}
\label{sec:coupling}

Each vehicle is a rigid body governed by the Newton--Euler equations with the rotors' gyroscopic moments and a
quaternion attitude, loaded by its rotors and by a parasitic drag $-\frac12\rho C_DA\,|\vect{u}_r|\,\vect{u}_r$, with
$\vect{u}_r$ the body's velocity relative to the flow sampled at its centre; the airframe is not otherwise resolved.
Vehicles follow prescribed motor speeds or position and heading targets through a cascaded proportional--derivative
controller~\cite{mahony2012}, whose wrench is allocated to the motors by bounded least squares on the rotors' current
aerodynamic wrenches, so that the allocation responds to wake interactions; the motor speeds may follow their commands
with a first-order lag, and the gains are then placed for the lagged loop. An outer step $\Delta t_o$ carries loads,
control, shedding and the vehicles, and for the actuator line resolves the blade passage; the vehicles advance by the
strong-stability-preserving third-order scheme of Shu and
Osher~\cite{shu1988,gottlieb2001}. The wake advances by $N_f$ substeps of the low-storage third-order Runge--Kutta
scheme of Williamson~\cite{williamson1980} on positions, strengths and $\sigma^2$, with bound circulation and
vehicle positions held at their accepted values; the coupling is therefore first order in $\Delta t_o$, while
refining the substeps isolates the wake's own temporal error.

\subsection{Particle solver and GPU implementation}
\label{sec:solver}

Every Runge--Kutta stage needs $\uvec$ and $\Jt$ at all particles. The evaluator is a treecode~\cite{barnes1986}
with solid-harmonic expansions of degree $P=16$~\cite{greengard1987,dehnen2014}, batched over target leaves, on a
k-d tree of Morton-ordered particles~\cite{morton1966,karras2012} with leaves of at most 128 particles.
A source node is evaluated by its expansion for a target leaf if (i) $(h_t+h_s)/|\vect c_t-\vect c_s|\le\theta=0.45$,
(ii) the gap between the two cubes exceeds $R\sigma_{s,\max}$, and (iii) the target cube lies outside the source
cube's convergence sphere. By \cref{prop:exterior}, test (ii) makes the far field exact up to the truncation of the
series, which decreases as $\varrho^{P+1}$ with the convergence ratio $\varrho\le\sqrt3\theta$. In single precision
each accepted node uses the smallest degree in $\{8,12,16\}$ whose truncation term does not exceed that of a node
with $\varrho=0.65$ at full degree. Other nodes are summed directly with the compact core, so the near work grows with
the local density, not with $N$, and one evaluation costs $\order(NP^2\log N+N\bar n)$, $\bar n$ the mean number of
neighbours within the core support. Interaction lists are never truncated.

Swarmlet is written in Julia~\cite{bezanson2017} with portable kernels (KernelAbstractions~\cite{kernelabstractions},
CUDA.jl~\cite{besard2019}), and all state, including vehicles, controllers and motor allocation, stays resident on the
GPU. Double precision serves as a reference and single precision, with near-field sums accumulated in
double~\cite{higham2002}, for production; both precisions use the same code and differ only in number representation. On several MPI processes~\cite{byrne2021}, particles are partitioned along the Morton order and each rank
receives only the near sources and accepted expansions that its targets need, a locally essential
tree~\cite{warren1993}; no rank holds the global particle set. Two- and four-rank runs agree with one-rank runs to the partition-dependent far-field
error. On one NVIDIA RTX~4080, a right-hand-side evaluation costs \SIrange{100}{140}{ns} per particle from $6.6\times10^4$
to $4.2\times10^6$ uniformly distributed particles, and the interaction kernel, $82\%$ of a coupled step, runs at about $40\%$ of the
single-precision peak. \Cref{tab:parameters} lists the wake parameters, which are the same for every rotor case of
\cref{sec:validation}.

\begin{table}[t]
\centering
\caption{Parameters of the wake method, common to the rotor cases of \cref{sec:validation}; the swarm of
\cref{sec:swarm} uses a coarser setting. Rotor geometry, source core, time step and precision are given with each
case.}
\label{tab:parameters}
\footnotesize
\begin{tabular}{@{}ll@{}}
\toprule
Quantity & Value \\
\midrule
Core & compact, support $R=4\sigma$; rVPM $f=0$, $g=1/5$; transposed stretching \\
Time integration & Williamson low-storage RK3 (wake); SSP-RK3 (vehicles) \\
Far field & $P=16$ (single precision: adaptive $P'\in\{8,12,16\}$), $\theta=0.45$, gap $>R\sigma_{s,\max}$ \\
Subfilter model & dynamic, $\Delta^2=12\sigma_{s,\max}^2$; global $C$ by core spreading with $\nu$; Lagrangian excess by PSE, \\
 & limited to $\nu_{T,p}\le1/(4\Delta t\hat D_p)$, its changes of $\sum\ap$ and impulse removed globally \\
Relaxation & projection, $\lambda=0.03$ per outer step; changes of $\sum\ap$ and impulse removed globally \\
Lifecycle & strength window $[10^{-4},10]\,\alpha_{\mathrm{ref}}$, core window $[0.1\sigma_{s,\min},5\sigma_{s,\max}]$; \\
 & removed strength redistributed \\
Actuator line & near sheet with core $c/4$, circulation relaxation $1/2$; $\ge18$ steps per revolution \\
Blade-element disk & Prandtl's tip loss on the annulus inflow, \cref{eq:prandtl} \\
Disk shedding & swept arcs on bins and time layers of $\sigma_s/4$ \\
\bottomrule
\end{tabular}
\end{table}

\section{Verification}
\label{sec:verification}

\subsection{Accuracy of the particle solver}
\label{sec:ver-solver}

The solver departs from direct summation with the same core only by far-field truncation and rounding. We evaluate
$\uvec$ and $\Jt$ at \num{4096} random targets among $N=\num{32768}$ particles in three distributions (randomly placed
vortex rings, Gaussian clusters and a uniform cube) against a direct double-precision sum (\cref{fig:field-accuracy}). In double precision the relative $L^2$ errors fall geometrically with
the expansion degree, by a factor of about $0.53$ per degree, the rate set by the convergence ratios that
$\theta=0.45$ bounds by $0.78$. At $P=16$ the velocity errors are \numrange{1.5e-7}{4.2e-7} and the gradient
errors \numrange{3.5e-7}{4.3e-7}. The single-precision production solver, with adaptive degree and
double-precision accumulation, gives \numrange{2.9e-7}{5.0e-7}, and raising its degree to 24 only lowers them to
the single-precision rounding level, \numrange{1.1e-7}{3.3e-7}: at $P=16$, truncation and rounding are of the
same order.

\begin{figure}[t]
\centering
\includegraphics[width=\textwidth]{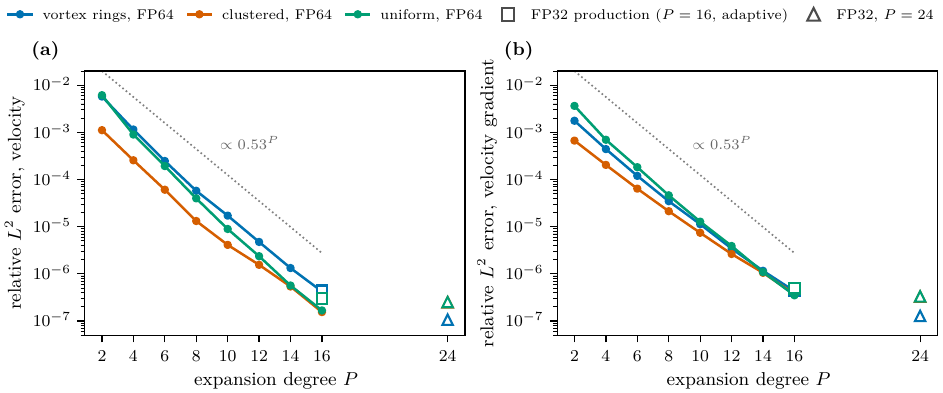}
\caption{Relative $L^2$ error of (a) the velocity and (b) the velocity gradient against direct summation with the
same core, $N=\num{32768}$, three distributions. Lines: double precision against the expansion degree $P$. Open symbols:
the single-precision production solver at $P=16$ and at $P=24$.}
\label{fig:field-accuracy}
\end{figure}

\subsection{Vortex rings: convergence in space and time}
\label{sec:ver-ring}

A ring of radius $R_0=\SI{0.5}{m}$, circulation $\Gamma=\SI{1}{m^2.s^{-1}}$ and core $\sigma=\SI{0.05}{m}$ is
represented by particles on a circle with inviscid rVPM. As the spacing $h$ is refined, the initial translation speed
converges rapidly once the cores overlap (\cref{fig:ring-convergence}a): its difference from the finest
discretization is $3.5\times10^{-3}$ at $h/\sigma=2$, $5.3\times10^{-6}$ at $h/\sigma=1$ and $1.2\times10^{-7}$ at
$h/\sigma=0.5$, and then stays at the solver's truncation level. The converged speed is $0.44\%$ above that of a
continuous Gaussian-core filament computed by quadrature, in both precisions and at all resolutions; this is a
property of the compact core, whose smaller second moment makes a thin ring faster~\cite{saffman1970}. For temporal
convergence, two coaxial rings leapfrog for \SI{1}{s} with time steps from 40 to \SI{2.5}{ms}
(\cref{fig:ring-convergence}b). The observed orders of the position error are $2.90$, $2.97$ and $2.99$, confirming
third order for the coupled rVPM system including the evolving cores, and single precision changes the positions by
$10^{-6}R_0$ and the impulse by $9\times10^{-8}$.

\begin{figure}[t]
\centering
\includegraphics[width=\textwidth]{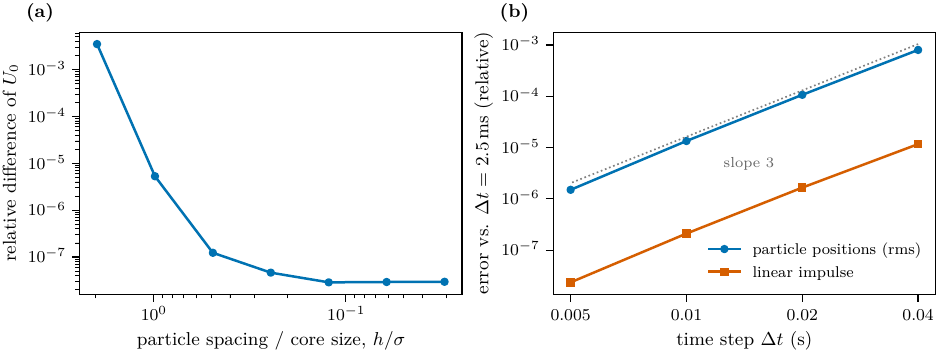}
\caption{Convergence on vortex rings (inviscid rVPM, double precision). (a) Initial translation speed against the
spacing $h$, as the difference from $h/\sigma=0.015$. (b) Leapfrogging ring pair at $t=\SI{1}{s}$: root-mean-square
position error relative to the ring radius and relative impulse error against the time step, measured against
$\Delta t=\SI{2.5}{ms}$.}
\label{fig:ring-convergence}
\end{figure}

\subsection{Subfilter closure on canonical rings}
\label{sec:ver-sfs}

An LES closure should leave a laminar ring nearly untouched. \Cref{fig:sfs-rings}(a) follows a ring with
$\Gamma/\nu=1500$ at two resolutions for \SI{3}{s} in single precision. With the
closure of \cref{sec:sfs}, without relaxation, the ring keeps $99.90\%$ of its linear impulse at both resolutions,
and with the PSE variant $98.5\%$, whose variable-viscosity PSE is not corrected for impulse. For reference we also show our
implementation, on the compact core, of a strength-dissipation dynamic procedure of the kind used in earlier rVPM
work~\cite{alvarez2024}, which removes strength along the stretching direction with a per-particle coefficient. It
keeps $54\%$ and $51\%$ and does not improve with refinement, because its per-particle coefficient is dominated by
noise; this characterizes our implementation, not necessarily the original. The dynamic coefficient of the closure
is small but not zero on this ring (\numrange{0.011}{0.020}), so the mean core grows to \SI{0.103}{m} instead of the
\SI{0.081}{m} of molecular diffusion alone, and the ring travels $14\%$ less far than with PSE.
\Cref{fig:sfs-rings}(b) shows a leapfrogging pair with the complete closure, including relaxation: its impulse is
unchanged to four digits at \SI{6}{s} and \SI{10}{s}, against $97.5\%$ and $95.6\%$ of the initial with the PSE
variant.

\begin{figure}[t]
\centering
\includegraphics[width=\textwidth]{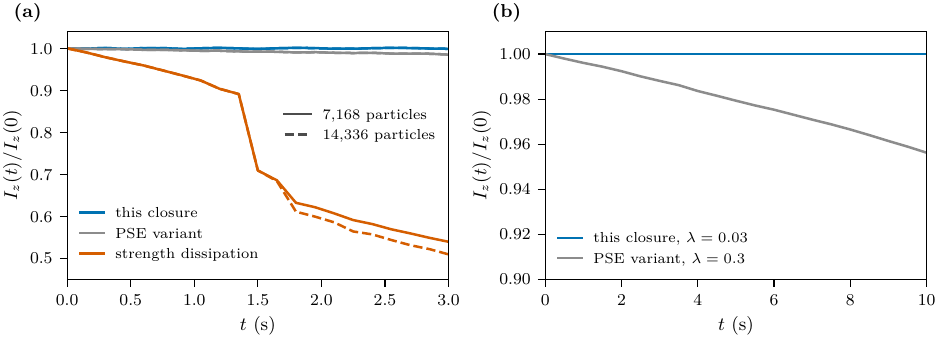}
\caption{Linear impulse $I_z$ of canonical rings (single precision, $\Gamma/\nu=1500$). (a) A laminar ring with the
closure of \cref{sec:sfs}, the PSE variant and a strength-dissipation dynamic model, each at two resolutions. (b) A
leapfrogging ring pair with the closure and relaxation by projection ($\lambda=0.03$) and with the PSE variant.}
\label{fig:sfs-rings}
\end{figure}

\section{Validation}
\label{sec:validation}

All runs use the wake method of \cref{tab:parameters} in single precision and start from an empty wake with the
rotors held in place. Loads fluctuate by several percent between revolutions, so we report revolution means over
declared windows together with the two halves of each window.

\subsection{DJI 9443 propeller in hover}
\label{sec:val-dji}

Zawodny et al.\ measured \SI{2.317}{N} for the two-blade DJI~9443 propeller (diameter \SI{0.24}{m}) at
\SI{5400}{rpm}~\cite{zawodny2016}. The model uses the blade geometry and section polars of FLOWUnsteady's hover
example~\cite{alvarez2022flowunsteady}, and the blade-element disk adds the tip loss of \cref{eq:prandtl}; nothing
is fitted to the measurement. The rotor has 8 radial elements, a source core $\sigma_s=0.1R_r$, 72 steps per revolution
with four fluid substeps, and runs for 20 revolutions.

\begin{figure}[t]
\centering
\includegraphics[width=\textwidth]{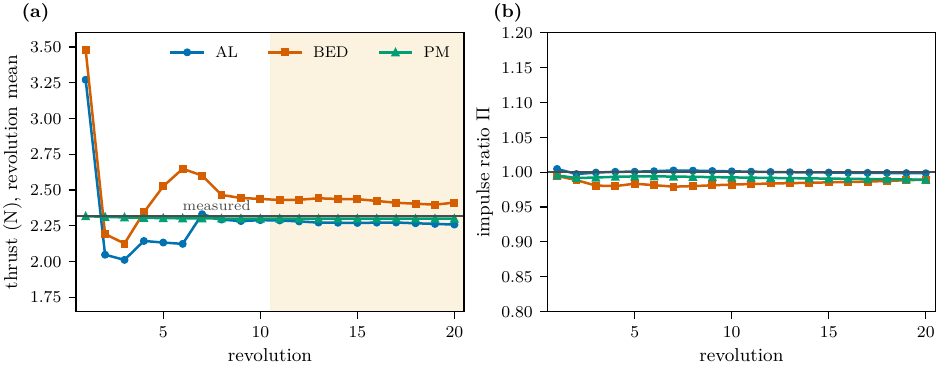}
\caption{DJI~9443 hover at \SI{5400}{rpm}. (a) Revolution-mean thrust of the actuator line (AL), blade-element disk
(BED) and performance-map disk (PM); the line is the measurement~\cite{zawodny2016} and the band the averaging window.
(b) Ratio $\Pi$ of the wake's linear impulse to the delivered thrust impulse, \cref{eq:impulse-ratio}.}
\label{fig:dji}
\end{figure}

\begin{table}[t]
\centering
\caption{DJI~9443 at \SI{5400}{rpm}: mean thrust over revolutions \numrange{11}{20} against the measured
\SI{2.317}{N}~\cite{zawodny2016}, with the two halves of the window, the range of the impulse ratio $\Pi$ over
20 revolutions, and the particle count at revolution 20.}
\label{tab:dji}
\small
\setlength{\tabcolsep}{5pt}
\begin{tabular}{@{}lccccc@{}}
\toprule
Rotor model & Thrust (N) & Difference & Halves & $\Pi$ & Particles \\
\midrule
Actuator line & 2.271 & $-1.99\%$ & $-1.78\%$, $-2.19\%$ & \numrange{0.997}{1.005} & \num{181399}\\
Blade-element disk & 2.422 & $+4.54\%$ & $+5.10\%$, $+3.98\%$ & \numrange{0.979}{0.995} & \num{291233}\\
Performance-map disk$^{a}$ & 2.299 & $-0.77\%$ & $-0.77\%$, $-0.77\%$ & \numrange{0.989}{0.995} & \num{67484}\\
\bottomrule
\multicolumn{6}{@{}l}{\footnotesize $^{a}$ The thrust is the map's input, not an independent prediction.}
\end{tabular}
\end{table}

The actuator line predicts the thrust within $2\%$ (\cref{fig:dji,tab:dji}); sampling only the free wake, with that model's
empirical hub- and tip-loss factors in place of its near sheet, it gives $-2.36\%$. The blade-element disk, which averages the
inflow around each annulus, is $4.5\%$ high; without a tip loss model it is $13\%$ high, and with the same hub- and tip-loss
factors applied to the lift $4.6\%$ low. The performance-map disk returns its map, computed by blade-element momentum
theory. All three wakes keep the momentum they receive: $\Pi$ starts at one, as \cref{eq:birth-impulse} requires,
and stays between $0.97$ and $1.01$, with a lateral impulse below $0.1\%$ of the axial. Both the connected shedding and the closure
are necessary for this: when the disks deposit their lift as the curl of a regularized body force instead, the map
disk's $\Pi$ reaches $2.75$ within five revolutions, and without an SFS model the actuator line's revolution-mean
thrust ranges from $-47\%$ to $+16\%$.

\subsection{Crazyflie propeller at three speeds}
\label{sec:val-crazyflie}

The UIUC database~\cite{uiucpropdb,brandt2011,deters2014} contains the static thrust and power of two specimens of
the \SI{46.99}{mm} propeller of the Crazyflie nano-quadrotor, which differ from their mean by $\pm$\numrange{2.5}{2.7}\%. No
measured polars exist for this thin, bent blade at chord Reynolds numbers of order $10^4$. The polars are built from
McArthur's cambered-plate measurements~\cite{mcarthur2008}, with the lift jump of a laminar separation bubble
replaced by the attached-flow lift line, and scaled by
\begin{equation}
k_l(\mathrm{Re})=0.736\left(\frac{\mathrm{Re}}{2\times10^4}\right)^{0.21},\qquad
k_d(\mathrm{Re})=1.13\left(\frac{\mathrm{Re}}{2\times10^4}\right)^{-0.15}
\label{eq:polar-fit}
\end{equation}
for lift and drag, held constant outside $4\times10^3\le\mathrm{Re}\le4\times10^4$. These factors were calibrated
on the measurements, as a least-squares fit (four parameters for six values) of the scales that reproduce the
measured thrust and torque at each speed. Their trends are those of thin plates at decreasing Reynolds number, which
lose lift and gain friction drag. The rotor has 24 radial elements, $\sigma_s=0.075R_r$ and 72 steps per revolution,
and each run covers 30 revolutions.

\begin{table}[t]
\centering
\caption{Static thrust and shaft torque of the \SI{46.99}{mm} Crazyflie propeller (actuator line), mean over
revolutions \numrange{9}{30}, against the mean of two UIUC specimens, with the thrust in the two halves of the window. The section
polars were calibrated on these points.}
\label{tab:crazyflie}
\footnotesize
\setlength{\tabcolsep}{4pt}
\begin{tabular}{@{}rcccccccc@{}}
\toprule
& \multicolumn{4}{c}{Thrust (mN)} & \multicolumn{3}{c}{Torque (\si{\micro N.m})} & \\
\cmidrule(lr){2-5}\cmidrule(lr){6-8}
rpm & Sim. & Meas. & Difference & Halves & Sim. & Meas. & Difference & Particles\\
\midrule
\num{10000} & 22.99 & 23.73 & $-3.14\%$ & $-2.21\%$, $-4.07\%$ & 149.2 & 147.1 & $+1.45\%$ & \num{402271}\\
\num{16923} & 74.24 & 75.79 & $-2.05\%$ & $-2.37\%$, $-1.74\%$ & 389.9 & 384.3 & $+1.45\%$ & \num{409783}\\
\num{19000} & 95.52 & 96.69 & $-1.22\%$ & $-1.38\%$, $-1.05\%$ & 482.6 & 482.9 & $-0.05\%$ & \num{405085}\\
\bottomrule
\end{tabular}
\end{table}

Thrust and torque agree with the specimen mean within $3.2\%$ at all three speeds (\cref{tab:crazyflie}). At
\num{16923} and \SI{19000}{rpm} this is inside the specimen spread; at \SI{10000}{rpm} the thrust is $0.6$ points
below it and falls over the window, from $-2.2\%$ in the first half to $-4.1\%$ in the second. No particle was
removed as non-finite. The near sheet governs the outer blade: at
\SI{19000}{rpm} the sections outboard of $0.7R_r$ operate at \SIrange{5.8}{9.2}{\degree} and the circulation peaks at $0.88R_r$,
close to a blade-element momentum solution with Prandtl's tip loss (\SIrange{6}{7}{\degree}).
Sampling only the free wake, the same actuator line sees its axial inflow fall from \SI{3.8}{m.s^{-1}} at $0.75R_r$
to \SI{0.7}{m.s^{-1}} at the tip, its outer sections operate beyond the lift maximum at \SIrange{9}{14}{\degree}, and with
the polars scaled to the measured thrust its torque exceeds the measurement by \numrange{10}{17}\%.

\subsection{Near wake of a hovering quadrotor}
\label{sec:val-wake}

Kiran et al.\ measured the velocity below a hovering Crazyflie quadrotor by planar particle image velocimetry
(PIV)~\cite{kiran2024,kiran2025,kiran2025data}. We compare the axial velocity across the plane through an adjacent
rotor pair at every measured station from $z/l=2$ to $12$ below the rotors, $l=\SI{32.5}{mm}$ being half the rotor
spacing, normalized by the induced velocity $U_i=\sqrt{mg/(2\rho A)}=\SI{8.75}{m.s^{-1}}$, with $mg$ the vehicle's
weight and $A$ the area of one rotor disk, as defined by the authors' processing~\cite{kiran2025data}. The model is a \SI{32}{g} x-layout quadrotor with the blade of
\cref{sec:val-crazyflie} scaled to its \SI{45}{mm} rotors, the same polars and numerics, and all rotors at
\SI{18777}{rpm}, where the measured thrust coefficient at the same chord Reynolds number carries the weight; the
rotor speed was not measured. The run covers 64 revolutions from rest, and the velocity is averaged over revolutions
\numrange{53}{64}, the four adjacent rotor pairs and both sides of each pair's symmetry plane, with the measurement folded alike
and nothing shifted or rescaled. Over the window the rotors deliver $2.3\%$ less than the weight.

\begin{figure}[t]
\centering
\includegraphics[width=\textwidth]{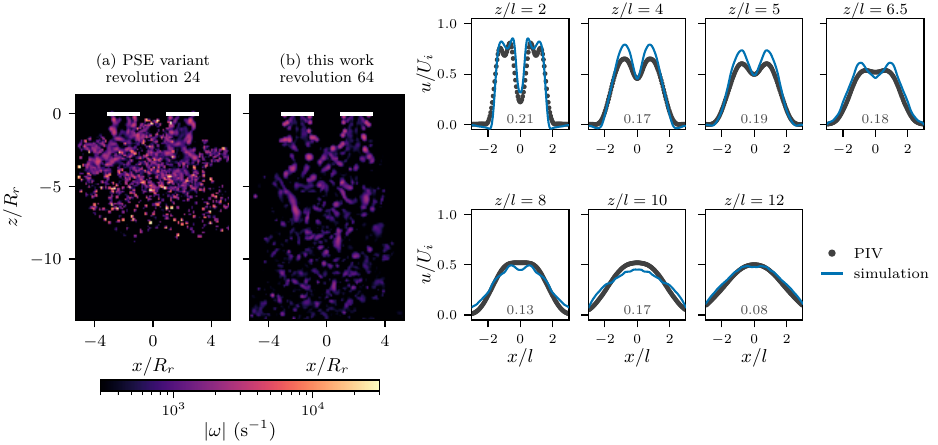}
\caption{Hovering Crazyflie-type quadrotor at \SI{18777}{rpm}. Vorticity magnitude on the plane through two opposite
rotors, reconstructed from the particles: (a) the PSE variant after 24 revolutions and (b) the closure of
\cref{sec:sfs} after 64; $R_r$ is the rotor radius. Right: axial velocity across an adjacent rotor pair at each PIV
station~\cite{kiran2025data} and the simulation averaged over revolutions \numrange{53}{64}, with the relative $L^2$ difference
over $|x/l|\le3$.}
\label{fig:hover-wake}
\end{figure}

The relative $L^2$ difference is $0.21$, $0.17$, $0.19$, $0.18$, $0.13$, $0.17$ and $0.08$ at $z/l=2$, 4, 5, 6.5, 8,
10 and 12 (\cref{fig:hover-wake}). Close to the rotors the simulated jets have the measured structure: at $z/l=2$
they peak at $0.86U_i$ against $0.81U_i$, each with a dip on the rotor axis, the velocity midway between the rotors is
$2.76$ against \SI{1.96}{m.s^{-1}}, and there is no flow outside the jets. The measured dip is about \SI{10}{mm} wide
and matches the motors below the rotors, which are not modelled; the simulated dip comes from the inboard loading.
This near field depends on the near sheet. Sampling only the free wake, with polars scaled to the thrust, the
actuator line loads the blade inboard; its jets at $z/l=2$ then peak at $1.07U_i$ with \SI{0.53}{m.s^{-1}} between the rotors, and the difference there is
$0.36$. The jets merge into one column by $z/l\approx8$, against about 6.5 in the measurement. From $z/l=4$ to 12 the
centreline speed is within $14\%$ of the measurement, and the half-width matches at $z/l=4$ and 5 (\SI{59}{mm} against
\SIrange{57}{59}{mm}) but grows faster below, to \SIrange{77}{78}{mm} against \SIrange{62}{66}{mm} at $z/l=\numrange{10}{12}$. Below
$z/l\approx6$ the column is surrounded by a co-flow of \SIrange{0.3}{1.5}{m.s^{-1}}, stronger than the measured
\SIrange{0.05}{0.64}{m.s^{-1}}. It is the remnant of the starting vortex of a simulation started from rest, whereas the
measurement is a steady state: \numrange{17}{27}\% of $\sum_p|\ap|$ lies more than $3l$ from the axis.

\Cref{fig:hover-wake}(a) shows why the filter width must not follow the particle cores. In the PSE variant the aged
wake below about four radii breaks into isolated particles whose vorticity reaches \SI{e6}{s^{-1}}, \numrange{200}{500}
times the rotor-plane value: stretched cores shrink to the core floor while their eddy viscosity and their PSE
neighbours vanish. That wake loses its momentum ($\Pi=0.37$ at revolution 24) and carries almost no flow below
$z/l\approx3$, with relative differences of \numrange{0.80}{0.97} at $z/l=\numrange{4}{8}$.

\section{Demonstration: a formation of 100 quadrotors}
\label{sec:swarm}

To illustrate the coupled method at the scale of a formation, 100 quadrotors fly together in closed loop
(\cref{fig:swarm}). Each is the \SI{32}{g} vehicle of \cref{sec:val-wake} with the inertia identified by
F{\"o}rster~\cite{forster2015} and four performance-map disks. The map takes the measured static thrust and torque
coefficients of the propeller~\cite{uiucpropdb} at the chord Reynolds number of \cref{sec:val-wake}, and their
variation with advance ratio from blade-element momentum theory on the calibrated polars of \cref{sec:val-crazyflie}.
The motors follow their commands with the first-order lag of \SI{65}{ms} identified in~\cite{forster2015}, the
controller's gains are placed for the lagged loop, and each body carries a parasitic drag with
$C_DA=\SI{3e-4}{m^2}$. The controller holds every vehicle on its slot of a sphere of radius $R_s=\SI{1.5}{m}$, with
nearest neighbours \SI{0.46}{m} apart, for \SI{1}{s}, and then moves the formation \SI{3}{m} along $x$ on
minimum-jerk paths~\cite{flash1985} of \SI{3}{s}.

A formation calls for a coarser setting than the cases of \cref{sec:validation}. The far field uses expansions of
uniform degree $P=8$ (field error about $4\times10^{-5}$), the disks gather their swept arcs on bins and time layers of
half a core, the source core is $\sigma_s=0.5R_r$, and the outer step of \SI{1.07}{ms} spans a third of a hover
revolution, with one fluid substep. For an isolated vehicle held at its hover speed, the thrust at this setting differs
by $0.4\%$ from that at $\sigma_s=0.075R_r$ and twelve steps per revolution (the map's thrust depends on the wake only
through the inflow at its probe), and the wake keeps $0.99$ of the delivered impulse over \SI{0.4}{s}. To bound memory,
wake older than about \SI{0.45}{s} is removed once the store of $7.2\times10^6$ particles fills, a removal that is not
conservative (\cref{sec:lifecycle}). This particle limit was determined by the available GPU memory of an NVIDIA 4080 device used for this calculation.
\begin{figure}[t]
\centering
\includegraphics[width=\textwidth]{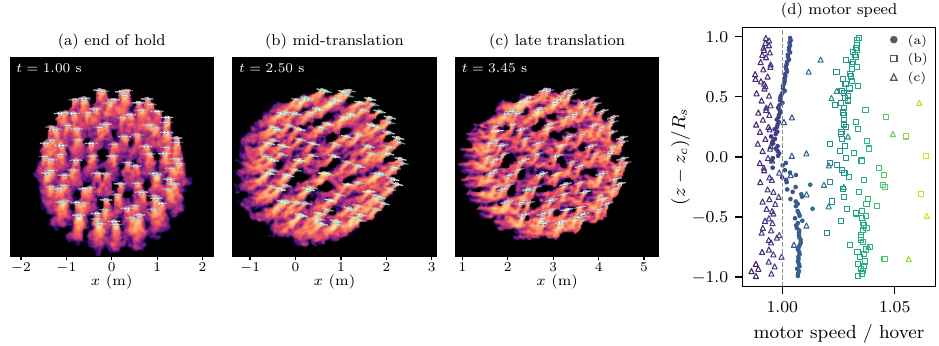}
\caption{Formation of 100 quadrotors on performance-map disks. (a--c) Wake strength $|\ap|$ projected on a
vertical plane (side view, logarithmic scale) at the end of the hold, midway through the translation and late in it,
each panel centred on the formation; the vehicles are drawn at twice their size in their attitude, with their rotor
disks coloured by motor speed as in (d). (d) Motor speed of each vehicle relative to an isolated vehicle in hover,
averaged over the last \SI{0.4}{s} of the hold and over the \SI{0.5}{s} before (b) and (c), against its height in the
sphere.}
\label{fig:swarm}
\end{figure}

At the end of the hold every vehicle stands above a short column of its own wake, and the wakes of the vehicles just
above descend onto those below them (\cref{fig:swarm}a). The lower hemisphere therefore needs a median $0.7\%$ more
motor speed than an isolated vehicle, and up to $1.4\%$, against $0.1\%$ in the upper hemisphere (\cref{fig:swarm}d).
Midway through the translation, at the peak speed of \SI{1.9}{m/s}, the wakes stream back and down behind the vehicles
(\cref{fig:swarm}b), which need \numrange{2}{6}\% more motor speed to tilt and accelerate; eight of the ten vehicles above
$4\%$ fly at or below the equator, where the wakes of the vehicles ahead and above pass. Late in the translation, as
the formation brakes through \SI{0.7}{m/s} (\cref{fig:swarm}c), the median vehicle runs \numrange{0.5}{0.7}\% below its
hover speed, its rotors aided by the oncoming flow, while four vehicles in the wakes of their neighbours still need up
to $6\%$ more.

\section{Discussion and limitations}
\label{sec:discussion}

The results indicate that the treatment of aged vorticity, rather than the accuracy of the far field, decides whether
a particle wake keeps its rotor's momentum. With a filter width that follows the particle cores, stretched cores
collapse with their eddy viscosity and a hovering quadrotor's wake retains about a third of the delivered impulse.
With a fixed filter width, core spreading and a Lagrangian exchange of the local excess, the impulse ratio of every
rotor wake stays between $0.97$ and $1.01$ and canonical rings keep at least $99.9\%$ of their impulse. The global
corrections contribute to this: the exchange, the relaxation and the rVPM correction term all change the wake's
impulse, and removing only their net change preserves what each does locally.

The validation has clear limits. The small rotor's polars were calibrated, and the DJI rotor's polars are modelled; each comparison concerns the mean loads of a hovering rotor. The
blade-element disk relies on the classical tip loss and over-predicts the DJI rotor's thrust by $4.5\%$. The vehicle's
motors, arms and body are not represented, although the measured jets show some of their character. The formation of \cref{sec:swarm} is a 
demonstration rather than a validation.

\section{Conclusions}
\label{sec:conclusions}

Swarmlet combines a meshless reformulated vortex particle method with rotor models of three fidelities, rigid-body
vehicles and flight control in a single GPU-resident code. A compact-support core makes the multipole far field exact
up to truncation and gives every near-field operator a finite range. A dynamic eddy viscosity on a fixed filter width,
applied by core spreading with a Lagrangian exchange of its local excess, and a relaxation by projection, both with
global conservative corrections, close the filtered equations without core collapse, and the wakes keep the lift
impulse that connected bound circulation delivers to them. The particle solver agrees with direct summation to a few parts in
$10^7$ and the particle equations converge at third order in time. Without calibration the actuator line predicts the
hover thrust of a reference propeller within $2\%$; with calibrated polars it reproduces the thrust and torque of a
small propeller within $3.2\%$ across a factor of two in speed, and the wake of a hovering quadrotor within relative
differences of \numrange{0.08}{0.21} from one to six rotor spacings below the rotors. At a coarser setting, the same
method flies a closed-loop formation of 100 quadrotors on one GPU.

\section*{Code and data availability}
\label{sec:availability}

The experimental data used in this work is those of the cited
sources~\cite{zawodny2016,uiucpropdb,mcarthur2008,kiran2025data}.

\section*{CRediT authorship contribution statement}

\textbf{Wyatt Horne:} Conceptualization, Methodology, Software, Validation, Formal analysis,
Investigation, Data curation, Writing -- original draft, Writing -- review \& editing, Visualization.

\section*{Declaration of competing interest}

The author declares no known competing financial interests or personal relationships that could
have appeared to influence the work reported in this paper.

\section*{Declaration of generative AI and AI-assisted technologies in the writing process}

During the preparation of this work the author used generative AI tools to assist with implementation,
testing, literature checks and the preparation of the manuscript.

\bibliographystyle{elsarticle-num}
\bibliography{references}

\end{document}